\newtheorem{theorem}{Theorem}
\newtheorem{lemma}{Lemma}
\newtheorem{remark}{Remark}
\DeclareMathOperator{\Tr}{Tr} 
\title{Limit theorems for loop soup random variables} 
\author[1,3]{Federico Camia \thanks{federico.camia@nyu.edu}}
\author[1,2]{Yves Le Jan \thanks{ yves.lejan@gmail.com}}
\author[1]{Tulasi Ram Reddy \thanks{tulasi@nyu.edu}}
\affil[1]{\it New York University Abu Dhabi, United Arab Emirates}
\affil[2]{\it New York University Shanghai, China}
\affil[3]{\it Vrije Universiteit Amsterdam, the Netherlands}
\date{}
\begin{document}

\maketitle

\begin{abstract}
This article deals with limit theorems for certain loop variables for loop soups whose intensity approaches infinity. We first consider random walk loop soups on finite graphs and obtain a central limit theorem when the loop variable is the sum over all loops of the integral of each loop against a given one-form on the graph. An extension of this result to the noncommutative case of loop holonomies is also discussed. As an application of the first result, we derive a central limit theorem for windings of loops around the faces of a planar graphs. More precisely, we show that the winding field generated by a random walk loop soup, when appropriately normalized, has a Gaussian limit as the loop soup intensity tends to $\infty$, and we give an explicit formula for the covariance kernel of the limiting field. We also derive a Spitzer-type law for windings of the Brownian loop soup, i.e., we show that the total winding around a point of all loops of diameter larger than $\delta$, when multiplied by $1/\log\delta$, converges in distribution to a Cauchy random variable as $\delta \to 0$.
\end{abstract}

\section{Introduction}
Windings of Brownian paths have been of interest since Spitzer's classic result \cite{spitzer} on their asymptotic behavior which states that, if $\theta(t)$ is the winding angle of a planar Brownian path about a point, then $2(\log t)^{-1}\theta(t)$ converges weakly to a Cauchy random variable as $t \rightarrow \infty$. The probability mass function for windings of any planar Brownian loop was computed in \cite{Yor_winding} (see also \cite{Yves_brownian}), and similar results for random walks were obtained in \cite{schapira2011}. Windings of simple random walks on the square lattice were more recently studied in \cite{budd_winding, budd_winding_2018}.

Symanzik, in his seminal work on Euclidean quantum field theories \cite{symanzik}, introduced a representation of a Euclidean field as a ``gas'' of (interacting) random paths. The noninteracting case gives rise to a Poissonian ensemble of Brownian loops, independently introduced by Lawler and Werner \cite{lawler_werner} who called it the \emph{Brownian loop soup}. Its discrete version, the \emph{random walk loop soup} was introduced in \cite{Lawler_ferraras}.

Integrals over one-forms for loops ensembles, which are generalizations of windings, were considered in \cite[Chapter-6]{Loops_stflour}. Various topological aspects of loop soups, such as homotopy and homology, were studied in \cite{LeJan_loops_topology}. In \cite{camia2016conformal}, the n-point functions of fields constructed taking the exponential of the winding numbers of loops from a Brownian loop soups are considered. The fields themselves are, a priori, only well-defined when a cutoff that removes small loops is applied, but the n-point functions are shown to converge to conformally covariant functions when the cutoff is sent to zero. A discrete version of these winding fields, based on the random walk loop soup, was considered in \cite{camia_lis_brug}. In that paper, the n-point functions of these discrete winding fields are shown to converge, in the scaling limit, to the continuum n-point functions studied in \cite{camia2016conformal}. The same paper contains a result showing that, for a certain range of parameters, the cutoff fields considered in \cite{camia2016conformal} converge to random generalized functions with finite second moments when the cutoff is sent to zero. A similar result was established later in \cite{Yves_brownian} using a different normalization and a different proof.

In this article, we focus mainly on loop ensembles on graphs (see \cite{Loops_stflour} for an introduction and various results on this topic), except for Section \ref{sec-4}, which deals with windings of the Brownian loop soup. In Section \ref{CLT-windings}, we establish a central limit theorem for random variables that are essentially sums of integrals of a one-form over loops of a random walk loop soup, as the intensity of the loop soup tends to infinity. In Section \ref{sec:application} we apply the results of Section \ref{CLT-windings} to the winding field generated by a random walk loop soup on a finite graph and on the infinite square lattice. Finally, in Section \ref{loop_holonomies}, we discuss an extension of this results of Section \ref{CLT-windings} to the noncommutative case of loop holonomies.

%

\section{A central limit theorem for loop variables}\label{CLT-windings}
Let $\mathcal{G}=(X,E)$ be a finite connected graph 
and, for any vertices $x,y \in X$, let $d(x,y)$ denote the graph distance between $x$ and $y$ and $d_x$ the degree of $x$. 
The transition matrix $P$ for the random walk on the graph $\mathcal{G}$ with killing function $\kappa: X \rightarrow [0,\infty)$ is given by
\begin{align}
P_{xy}=\begin{cases}
\frac{1}{\kappa_x+d_x} & \text{ if } d(x,y)=1, \\ 0 & \text{ otherwise.}
\end{cases} 
\end{align}
Let $G=(I-P)^{-1}$ denote the Green's function corresponding to $P$. $G$ is well defined as long as $\kappa$ is not identically zero.

We call a sequence  $\{x_0,x_1,\dots,x_n, x_{n+1}\}$ of vertices of $\mathcal{G}$ with $d(x_i,x_{i+1})=1$ for every $i=0,\dots,n$ and with $x_{n+1}=x_0$ a \emph{rooted loop} with root $x_0$ and denote it by $\gamma_r$. To each $\gamma_r$ we associate a weight $w_r(\gamma_r)=\frac{1}{n+1}P_{x_0x_1}\dots P_{x_nx_0}$. For a rooted loop $\gamma_r=\{x_i\}$, we interpret the index $i$ as time and define an \emph{unrooted loop} as an equivalence class of rooted loops in which two rooted loops belong to the same class if they are the same up to a time translation. To an unrooted loop $\gamma$ we associate a weight $\mu(\gamma)=\sum_{\gamma_r \in \gamma}w_r(\gamma_r) $. The \emph{random walk loop soup} $\mathcal{L}^\lambda$ with intensity $\lambda>0$ is a Poissonian collection of unrooted loops with intensity measure $\lambda\mu$.

A \emph{one-form} on $\mathcal{G}$ is a skew-symmetric matrix  $A$ with entries $A_{xy}=-A_{yx}$ if $d(x,y)=1$ and $A_{xy}=0$ otherwise. A special case of $A$ is illustrated in Figure \ref{Cuts_in_graph}. For any (rooted/unrooted) loop $\gamma=\{x_0,x_1,\dots,x_n,x_0\}$, denote $$\int\limits_{\gamma}A=A_{x_0,x_1}+A_{x_1,x_2}+\dots+A_{x_n,x_0}.$$  

Given a one-form $A$ and a parameter $\beta \in {\mathbb R}$, we define a `perturbed transition matrix' $P^{\beta}$ with entries
\begin{align}\label{perturbed_transition}
P_{xy}^{\beta}= \begin{cases}
\frac{e^{i\beta A_{xy}}}{\kappa_x+d_x} & \text{ if } d(x,y)=1, \\ 0 & \text{ otherwise.}
\end{cases}
\end{align}
Note that $P^{\beta}=P$ when $\beta=0$.

Our aim is to derive a central limit theorem for the loop soup random variable
\[
\int\limits_{\mathcal{L}^\lambda}A = \sum\limits_{\gamma\in \mathcal{L}^\lambda}{\int\limits_{\gamma}A}
\]
as the intensity $\lambda$ of the loop soup increases to infinity. The key to prove such a result is the following representation of the characteristic function of $\int\limits_{\mathcal{L}^\lambda}A$.
\begin{lemma}\label{partition_function}
With the above notation, assuming that $\kappa$ is not identically zero, we have that
$$\mathbb{E}_{\mathcal{L}^\lambda}\left[e^{\left(i\beta \int\limits_{\mathcal{L}^{\lambda}}A \right)}\right]=\left(\frac{\det(I-P^{\beta } )}{\det(I-P)}\right)^{-\lambda}.$$
\end{lemma}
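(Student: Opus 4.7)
The plan is to use the exponential formula for Poisson point processes combined with the classical identity relating the sum over loops to a log-determinant, generalized to the perturbed transition matrix $P^\beta$.

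First I would invoke Campbell's formula for the Poisson point process $\mathcal{L}^\lambda$ of intensity measure $\lambda\mu$: for any complex-valued function $f$ on unrooted loops with $\sum_\gamma \mu(\gamma)|e^{if(\gamma)}-1|<\infty$,
\[
\mathbb{E}\Bigl[e^{i\sum_{\gamma\in\mathcal{L}^\lambda}f(\gamma)}\Bigr]=\exp\Bigl(\lambda\sum_\gamma\mu(\gamma)\bigl(e^{if(\gamma)}-1\bigr)\Bigr).
\]
Applied to $f(\gamma)=\beta\int_\gamma A$, the task reduces to evaluating the sum $S(\beta):=\sum_\gamma\mu(\gamma)\,e^{i\beta\int_\gamma A}$ (and the analogous $S(0)$ which is the more familiar quantity).

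The second step is to unfold unrooted loops into rooted ones. Since the integral $\int_\gamma A$ is cyclically invariant, it depends only on the unrooted loop class, so
\[
S(\beta)=\sum_{\gamma_r}w_r(\gamma_r)\,e^{i\beta\int_{\gamma_r}A}.
\]
For a rooted loop $\gamma_r=\{x_0,\dots,x_n,x_0\}$, the factor $e^{i\beta A_{x_j,x_{j+1}}}$ combines with $P_{x_j,x_{j+1}}$ to give exactly $P^\beta_{x_j,x_{j+1}}$. Summing first over all rooted loops of length $n+1$ based at any vertex yields $\tfrac{1}{n+1}\Tr\bigl((P^\beta)^{n+1}\bigr)$, and summing over $n\geq 0$ gives
\[
S(\beta)=\sum_{k\geq 1}\tfrac{1}{k}\Tr\bigl((P^\beta)^{k}\bigr)=-\Tr\log(I-P^\beta)=-\log\det(I-P^\beta).
\]

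The main technical point to check is that all series converge absolutely so the rearrangements are legitimate. This follows from the fact that $\kappa\not\equiv 0$ on the finite connected graph $\mathcal{G}$, which forces the spectral radius of $P$ to be strictly less than $1$; since $|P^\beta_{xy}|=P_{xy}$, the matrix $P^\beta$ has entries bounded in modulus by those of $P$, and in particular $\|(P^\beta)^k\|$ decays geometrically, making $I-P^\beta$ invertible and the power series for $\log\det$ convergent. Putting the pieces together,
\[
S(\beta)-S(0)=\log\det(I-P)-\log\det(I-P^\beta)=\log\frac{\det(I-P)}{\det(I-P^\beta)},
\]
so Campbell's formula yields
\[
\mathbb{E}\bigl[e^{i\beta\int_{\mathcal{L}^\lambda}A}\bigr]=\exp\Bigl(\lambda\log\tfrac{\det(I-P)}{\det(I-P^\beta)}\Bigr)=\Bigl(\tfrac{\det(I-P^\beta)}{\det(I-P)}\Bigr)^{-\lambda},
\]
which is the claimed identity. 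The only nontrivial obstacle is the convergence/exchange of sums argument above; once the spectral bound on $P^\beta$ is in hand, everything else is bookkeeping with the rooted-loop weights.
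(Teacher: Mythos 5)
Your proposal is correct and follows essentially the same route as the paper: Campbell's formula for the Poisson process, unfolding unrooted loops into rooted ones to identify $\sum_\gamma \mu(\gamma)e^{i\beta\int_\gamma A}$ with $\sum_k \frac{1}{k}\Tr((P^\beta)^k) = -\log\det(I-P^\beta)$, with convergence guaranteed by the spectral radius of $P^\beta$ being strictly less than $1$ when $\kappa\not\equiv 0$. (The only cosmetic difference is that the paper starts the trace sum at $k=2$, which is equivalent since $\Tr(P^\beta)=0$ in the absence of self-loops.)
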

\begin{proof}
Note that  $\det(I-P^{\beta})^\lambda$ is well defined and can be written as $e^{\lambda\log\det(I-P^{\beta})}=e^{\lambda\Tr\log(I-P^{\beta})}$. Since $\kappa $ is not identically $0$, the spectral radius of  $P^{\beta}$ is strictly less than $1$, which implies that
\[
-\log(I-P^{\beta})=\sum\limits_{k=1}^{\infty}\frac{(P^{\beta})^k}{k},
\]	
where the series in the above expression is convergent.

The weight of all loops of length $k\geq2$ is given by 
$\frac{1}{k}\Tr(P^k)$. Therefore the measure of all loops of arbitrary length is 
$$\sum\limits_{k=2}^\infty\frac{1}{k}\Tr(P^k)=-\Tr\log(I-P)=-\log\det(I-P).$$
Similarly, we have
\[
\int e^{i\beta\int\limits_{\gamma}A}d\mu(\gamma) = \sum\limits_{k=2}^\infty\frac{1}{k}\Tr((P^{\beta})^k)= -\log\det(I-P^{\beta}).
\]
Therefore, invoking Campbell's theorem for point processes, we have that
\begin{align*}
\mathbb{E}_{\mathcal{L}^\lambda}\left[e^{\left(i\beta\sum\limits_{\gamma\in \mathcal{L}^\lambda}{\int\limits_{\gamma}A}\right)}\right]  = e^{\left(\lambda\int [\exp{(i\beta\int\limits_{\gamma}A)}-1]d\mu(\gamma)\right)}
&= \frac{\det(I-P^{\beta})^{-\lambda}}{\det(I-P)^{-\lambda}},
\end{align*}
%
%
which concludes the proof.
\end{proof}

A way to interpret the lemma, which also provides an alternative proof, is to notice that $\det(I-P)^{-\lambda}$ is the partition function $Z_{\lambda}$ of the random walk loop soup on $\mathcal{G}$ with transition matrix $P$ and intensity $\lambda$, while $\det(I-P^{\beta})^{-\lambda}$ is the partition function $Z_{\lambda}^{\beta}$ of a modified random walk loop soup on $\mathcal{G}$ whose transition matrix is given by $P^{\beta}$. The expectation in Lemma \ref{partition_function} is given by $1/Z_{\lambda}=\det(I-P)^{\lambda}$ times the sum over all loop soup configurations $\mathcal{L}^\lambda$ of $\exp{\left(i\beta\sum\limits_{\gamma\in \mathcal{L}^\lambda}{\int\limits_{\gamma}A}\right)}$ times the weight of $\mathcal{L}^\lambda$. The factor $\exp{\left(i\beta\sum\limits_{\gamma\in \mathcal{L}^\lambda}{\int\limits_{\gamma}A}\right)}$ can be absorbed into the weight of $\mathcal{L}^\lambda$ to produce a modified weight corresponding to a loop soup with transition matrix $P^{\beta}$. Therefore the sum mentioned above gives the partition function $Z_{\lambda}^{\beta}=\det(I-P^{\beta})^{-\lambda}$. Other interpretations of the quantity in Lemma \ref{partition_function} will be discussed in the next section, after the proof of Lemma \ref{planar_n_point}.

To state our next result, we introduce the Hadamard and wedge matrix product operations denoted by $\odot$ and $\wedge$, respectively. For any two matrices U and V of same size, the Hardamard product between them (denoted $U \odot V$) is given by the matrix (of the same size as $U$ and $V$) whose entries are the products of the corresponding entries in $U$ and $V$. The following is the only property of matrix wedge products that will be used in this article: If $\lambda_1,\dots,\lambda_n$ are the eigenvalues of an $n \times n$ matrix $U$, then $\Tr(U^{\wedge k})=\sum\limits_{i_1<\dots<i_k}\lambda_{i_1}\dots\lambda_{i_k}$ for all $k \leq n$. Recall also that $G=(I-P)^{-1}$ denotes the Green's function corresponding to $P$, which is well defined as long as the killing function $\kappa$ is not identically zero.


\begin{theorem}\label{clt_one_form}
With the above notation, assuming that $\kappa$ is not identically zero, the distribution of the random variable $\frac{1}{\sqrt\lambda}\int\limits_{\mathcal{L}^\lambda}A=\frac{1}{\sqrt\lambda}\sum\limits_{\gamma\in \mathcal{L}^\lambda}{\int\limits_{\gamma}A}$ tends to a Gaussian distribution as $\lambda \to \infty$. More precisely,
\[
\lim\limits_{\lambda \rightarrow \infty}\mathbb{E}_{\mathcal{L}^\lambda}\left[e^{\left(i\frac{s}{\sqrt{\lambda}} \int\limits_{\mathcal{L}^{\lambda}} A \right)}\right] = \exp{\Big[-\frac{s^2}{2} \Big(\Tr\big((P\odot A^{\odot 2})G \big) - \Tr\big((P\odot A)G(P\odot A)G \big) \Big) \Big]}.
\]
\end{theorem}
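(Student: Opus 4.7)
The plan is to start from the closed-form characteristic function given by Lemma~\ref{partition_function}, substitute $\beta = s/\sqrt{\lambda}$, and expand the log-characteristic function in powers of $\beta$ up to second order. Writing $R := P^\beta - P$, the entrywise expansion $e^{i\beta A_{xy}} = 1 + i\beta A_{xy} - \frac{\beta^2}{2}A_{xy}^2 + O(\beta^3)$ gives
\[
R = i\beta\,(P \odot A) - \tfrac{\beta^2}{2}(P \odot A^{\odot 2}) + O(\beta^3).
\]
Since $(I-P)^{-1}(I-P^\beta) = I - GR$, Lemma~\ref{partition_function} yields
\[
\log \mathbb{E}_{\mathcal{L}^\lambda}\bigl[e^{i\beta \int_{\mathcal{L}^\lambda} A}\bigr] = -\lambda\log\det(I-GR) = \lambda\sum_{k\geq 1}\frac{1}{k}\Tr\bigl((GR)^k\bigr),
\]
which is now amenable to Taylor expansion in $\beta$.

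Next I would plug in $\beta = s/\sqrt{\lambda}$ and track orders in $\lambda$. Because each factor of $R$ is $O(\beta)$, the $k$-th trace satisfies $\lambda\,\Tr((GR)^k) = O(\lambda^{1-k/2})$, so only $k=1$ and $k=2$ can survive as $\lambda\to\infty$, while all $k\geq 3$ are negligible. Isolating the surviving pieces gives
\[
\lambda\,\Tr(GR) = is\sqrt{\lambda}\,\Tr\bigl(G(P\odot A)\bigr) - \tfrac{s^2}{2}\Tr\bigl(G(P\odot A^{\odot 2})\bigr) + o(1),
\]
while
\[
\tfrac{\lambda}{2}\Tr\bigl((GR)^2\bigr) = -\tfrac{s^2}{2}\Tr\bigl(G(P\odot A)\,G(P\odot A)\bigr) + o(1).
\]

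The hard part is to dispose of the apparent $O(\sqrt{\lambda})$ drift, i.e.\ to verify that $\Tr\bigl(G(P\odot A)\bigr)=0$; without this the characteristic function could not possibly have a finite limit. For this I would use the fact that the random walk is reversible: since $P_{xy} = 1/(\kappa_x + d_x)$ whenever $d(x,y)=1$, the weights $\pi(x) = \kappa_x + d_x$ satisfy detailed balance, which makes the unrooted loop measure $\mu$ invariant under the time-reversal involution $\gamma \mapsto \gamma^{-1}$. Combined with the identity $\int_{\gamma^{-1}} A = -\int_\gamma A$ coming from the skew-symmetry of $A$, pairing each loop with its reverse forces the mean $\int \int_\gamma A\, d\mu(\gamma)$ to vanish, and expanding $G=\sum_{n\geq 0} P^n$ and unfolding the trace identifies this mean exactly with $\Tr\bigl(G(P\odot A)\bigr)$. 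The two surviving $O(1)$ contributions can then be rearranged by cyclicity of the trace into the form appearing in the theorem, and L\'evy's continuity theorem converts the convergence of characteristic functions into the claimed convergence in distribution.
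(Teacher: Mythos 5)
Your proposal is correct and follows essentially the same route as the paper: both start from Lemma~\ref{partition_function}, reduce the characteristic function to a determinant of a perturbation of the identity (your $\det(I-GR)^{-\lambda}$ is the paper's $\det(I+GE^{\beta})^{-\lambda}$ with $R=-E^{\beta}$), expand to second order in $\beta=s/\sqrt{\lambda}$, and identify the same two traces; your time-reversal argument for killing the $O(\sqrt{\lambda})$ drift is just a probabilistic repackaging of the paper's algebraic cancellation via $G_{xy}P_{yx}=G_{yx}P_{xy}$ and $A_{xy}=-A_{yx}$, both resting on reversibility of $P$. One caveat: the expression you actually derive, $-\frac{s^2}{2}\bigl(\Tr((P\odot A^{\odot 2})G)+\Tr((P\odot A)G(P\odot A)G)\bigr)$, carries a plus sign on the second trace and therefore does not literally match the minus sign in the theorem's display --- but this is an inconsistency internal to the paper itself (the final line of its own proof also ends with the plus sign), not a defect of your argument.
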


\begin{proof}
Let $E^{\beta}=P-P^{\beta}$, then
\[
E_{xy}^{\beta}=\begin{cases}
\frac{1- e^{i\beta A_{xy}}}{\kappa_x+d_x} & \text{ if } d(x,y)=1, \\ 0 & \text{ otherwise.}
\end{cases}
\]
Invoking Lemma \ref{partition_function}, we have that 
\begin{eqnarray*}
\mathbb{E}_{\mathcal{L}^\lambda}\left[e^{\left(i\beta\sum\limits_{\gamma\in \mathcal{L}^\lambda}\int\limits_{\gamma}A\right)}\right] &=&
\left(\frac{\det(I-P^{\beta})}{\det(I-P)}\right)^{-\lambda} \nonumber \\
& = & \left(\frac{\det(I-P+E^{\beta})}{\det(I-P)}\right)^{-\lambda}, \nonumber \\
& = & \left(\det(I-P)^{-1}(I-P+E^{\beta})\right)^{-\lambda}, \nonumber \\
& = & \left(\det(I+(I-P)^{-1}E^{\beta})\right)^{-\lambda}, \nonumber \\
& = & \left(\det(I+GE^{\beta})\right)^{-\lambda}. \label{det_identity}
\end{eqnarray*}

Let $M$ be a square matrix  of dimension $n$ and $\|M\|$ denote the operator norm of $M$. Then,
\begin{align}
\det(I+M) &= 1+\Tr(M) +\Tr(M \wedge M) + \dots +\Tr(M^{\wedge n})
\end{align}
and
\begin{align}
|\Tr(M^{\wedge k})| & \leq  {n \choose k} \|M\|^k.
\end{align}
Using this, we can write
\begin{equation*}\label{mat_approx}
\left(\det(I+GE^{\beta})\right)^{-\lambda} = (1 + \Tr(GE^{\beta})+\Tr(GE^{\beta}\wedge GE^{\beta})+O(\beta^3\|A\|^3))^{-\lambda},
\end{equation*}
which leads to
\begin{align}
&\lim\limits_{\substack{\lambda \rightarrow \infty}} \mathbb{E}_{\mathcal{L}^\lambda}\Big[\exp{\Big(i\frac{s}{\sqrt{\lambda}}\sum\limits_{\gamma\in \mathcal{L}^\lambda}\int\limits_{\gamma}A\Big)}\Big]\\ & = \lim\limits_{\substack{\lambda \rightarrow \infty }}\left(1+\Tr(GE^{\frac{s}{\sqrt{\lambda}}})+\Tr(GE^{\frac{s}{\sqrt{\lambda}}}\wedge GE^{\frac{s}{\sqrt{\lambda}} })+O\left(\lambda^\frac{-3}{2}\right)\right)^{-\lambda}\\
&= \lim\limits_{\substack{\lambda \rightarrow \infty}} \exp{\left(-\lambda\log\left(1+\Tr(GE^{\frac{s}{\sqrt{\lambda}} })+\Tr(GE^{\frac{s}{\sqrt{\lambda}}}\wedge GE^{\frac{s}{\sqrt{\lambda}}})+O\left(\lambda^\frac{-3}{2}\right)\right)\right)}.
\end{align}

To preceed, note that  $\Tr(GE^{\beta}\wedge GE^{\beta })=\frac{1}{2}(\Tr^2(GE^{\beta })-\Tr(GE^{\beta } GE^{\beta }))$. Moreover, using the identities $G_{xy}P_{yx}=G_{yx}P_{xy}$ and $A_{xy}=-A_{yx}$ several times, one gets
\begin{align}
\Tr(GE^{\beta})&=\sum\limits_{x \thicksim y}G_{xy}E^{\beta}_{yx}\\
&= \frac{1}{2}\sum\limits_{x \thicksim y}(G_{xy}E^{\beta}_{yx}+G_{yx}E^{\beta}_{xy}),\\
&= \frac{1}{2}\sum\limits_{x \thicksim y}G_{xy}P_{yx}(1-e^{i\beta A_{yx}})+G_{yx}P_{xy}(1-e^{i\beta A_{xy}}),\\
&= \sum\limits_{x \thicksim y}G_{xy}P_{yx}(1-\cos(\beta A_{yx})).
\end{align}
Therefore,
\begin{align}\Tr(GE^{\frac{s}{\sqrt{\lambda}} A})&=\frac{s^2}{2\lambda}\sum\limits_{x \thicksim y}G_{xy}P_{yx}A_{yx}^2+O \left(\frac{1}{\lambda^\frac{3}{2}}\|A\|^3\right)\\&=\frac{s^2}{2\lambda}\Tr(G(P\odot A^{\odot 2}))+O\left(\frac{1}{\lambda^\frac{3}{2}}\|A\|^3\right).
\end{align}
Similarly,
%
%
\begin{align}
\Tr(GE^{\beta} GE^{\beta}) &= \sum\limits_{\substack{x_0,x_1,x_2,x_3 \\ x_0\thicksim x_1; x_2\thicksim x_3}}E^{\beta}_{x_0,x_1}G_{x_1x_2}E^{\beta}_{x_2,x_3}G_{x_3x_0},\\
 & = \sum\limits_{\substack{x_0,x_1,x_2,x_3\\ x_0\thicksim x_1; x_2\thicksim x_3}} (1-e^{i\beta A_{x_0x_1}})(1-e^{i\beta A_{x_2x_3}})P_{x_0x_1}P_{x_2x_3}G_{x_3x_0}G_{x_1x_2} ,\\
 & =\sum\limits_{\substack{x_0,x_1,x_2,x_3\\ x_0\thicksim x_1; x_2\thicksim x_3}} (i\beta A_{x_0x_1}+O(\beta^2\|A\|^2))(i\beta A_{x_2x_3}+O(\beta^2\|A\|^2))P_{x_0x_1}P_{x_2x_3}G_{x_3x_0}G_{x_1x_2},\\
 & =\sum\limits_{\substack{x_0,x_1,x_2,x_3\\ x_0\thicksim x_1; x_2\thicksim x_3}} -\beta^2 A_{x_0x_1}P_{x_0x_1}G_{x_1x_2}A_{x_2x_3}P_{x_2x_3}G_{x_3x_0} +O(\beta^3\|A\|^3),\\
 & =-\beta^2\Tr[(P\odot A)G(P\odot A)G] +O(\beta^3\|A\|^3). 
\end{align}
%
%
Note that the expressions $\Tr(GE^{\frac{1}{\sqrt{\lambda}}})$ and $\Tr(GE^{\frac{1}{\sqrt{\lambda}}}\wedge GE^{\frac{1}{\sqrt{\lambda}}})$ are of the order $\frac{1}{\lambda}$. Using this fact, and expanding the logarithm in power series, the above computations give
\begin{eqnarray*}
& \lim\limits_{\substack{\lambda \rightarrow \infty}} -\lambda\log\bigg(1+\Tr(GE^{\frac{s}{\sqrt{\lambda}} })+\Tr(GE^{\frac{s}{\sqrt{\lambda}} }\wedge GE^{\frac{1}{\sqrt{\lambda}} })+O\left(\lambda^\frac{-3}{2}\right)\bigg) \\
& \quad \quad \quad \quad \quad = \lim\limits_{\substack{\lambda \rightarrow \infty}} \left(-\frac{s^2}{2}\Tr(G(P\odot A^{\odot 2}))-\frac{s^2}{2}\Tr[(P\odot A)G(P\odot A)G] +O(\lambda^{\frac{-1}{2}})\right)\\
& = -\frac{s^2}{2}\Tr(G(P\odot A^{\odot 2}))-\frac{s^2}{2}\Tr[(P\odot A)G(P\odot A)G],
\end{eqnarray*}
which concludes the proof.
\end{proof}

\begin{remark} \label{remark}
{\rm
It may be useful to note the following identity, which holds when $A$ is skew-symmetric and $P$ is symmetric:
\begin{equation*}\label{trace_identity}
\frac12\sum\limits_{\substack{ x_0\thicksim x_1\\ x_2\thicksim x_3}}P_{x_0x_1}P_{x_2x_3}A_{x_0x_1}A_{x_2x_3}[G_{x_0x_3}G_{x_1x_2}-G_{x_0x_2}G_{x_1x_3}]=\Tr[(P\odot A)G(P\odot A)G].
\end{equation*}
We will use this identity in the proof of Theorem \ref{Gaussian_winding_field} in the next section. 
}
\end{remark}


\section{Central limit theorem for the loop soup winding field at high intensity} \label{sec:application}


The winding field generated by a loop soup on a planar graph $\mathcal{G} = (X,E)$ is defined on the faces $f$ of $\mathcal{G}$, which we identify with the vertices of the dual graph $\mathcal{G}^*=(X^*,E^*)$ (i.e., $f \in X^*$). Fix any face $f \in X^*$ and let $f_0=f,f_1,\dots,f_n$ be a sequence of distinct faces of $\mathcal{G}$ that are nearest-neighbors in $\mathcal{G}^*$, with $f_n$ the infinite face. The sequence $f_0,f_1,\dots,f_n$ determines a directed path $p$ from $f$ to the infinite face. Let $e^{p}_{i}$ denote the edge between $f_i$ and $f_{i+1}$ oriented in such a way that it crosses $p$ from right to left.
We let $\text{cut}(f)$ denote the collection of oriented edges $\{ e^{f}_{i} \}_{i=0}^{n-1}$. (See the Figure \ref{Cuts_in_graph} below for an example.)
Note that $\text{cut}(f)$ depends on the choice of $p$, but since all $p$'s connecting $f$ to the infinite face are equivalent for our purposes, we don't include $p$ in the notation.

Now take an oriented loop $\ell$ in $\mathcal{G}$ and assume that $\ell$ crosses $p$. In this case, we say that $\ell$ crosses $\text{cut}(f)$ and we call the crossing \emph{positive} if $\ell$ crosses $p$ from right to left and \emph{negative} otherwise.
For an oriented loop $\ell$ in $\mathcal{G}$ and a face $f \in X^*$, we define the \emph{winding number} of $\ell$ about $f$ to be
\begin{eqnarray*}
W_{\ell}(f)&= &  \text{number of positive  crossings of $\text{cut}(f)$ by $\ell$} \\& &-\text{ number of negative crossings of $\text{cut}(f)$ by $\ell$}
\end{eqnarray*}
for any choice of $\text{cut}(f)$. We note that $W_{\ell}(f)$ is well defined because the difference above is independent of the choice of $\text{cut}(f)$. (This is easy to verify and is left as an exercise for the interested reader.)

For a loop soup $\mathcal{L}^\lambda$, we define
\begin{equation} \label{def:winding field}
W_{\lambda} = \{ W_\lambda(f) \}_{f \in \mathcal{G}^*} = \Big\{ \sum_{\ell \in \mathcal{L}^\lambda}W_{\ell}(f) \Big\}_{f \in \mathcal{G}^*}
\end{equation}
to be the \emph{winding field} generated by $\mathcal{L}^\lambda$.

Theorem \ref{clt_one_form} can be used to prove a CLT for the winding field $W_{\lambda}$, when properly normalized, as $\lambda \to \infty$. In order to use Theorem \ref{clt_one_form}, we need a definition and a lemma. For any collection of faces $f_1,\dots,f_n$ of $\mathcal{G}$ and any vector $\bar{t}=(t_1,\ldots,t_n)$, define a skew-symmetric matrix $A^{\bar t}$ as follows. For each $i=1,\ldots,n$, choose a cut from $f_i$ to the infinite face as described above and denote it $\text{cut}(f_i)$. If $e=(x,y)$ is an edge of $\text{cut}(f_i)$ with positive orientation set $A^{\bar t}_{xy}=t_i$; if $e=(x,y)$ is an edge of $\text{cut}(f_i)$ with negative orientation set $A^{\bar t}_{xy}=-t_i$; otherwise set $A^{\bar t}_{xy}=0$. Note that one can write $A^{\bar t}$ as $A^{t_1}_{f_1}+\ldots+A^{t_n}_{f_n}$ where $A^{t_i}$ is a matrix such that $(A^{t_i}_{f_i})_{xy}=t_i$ if $(x,y)$ is in $\text{cut}(f_i)$ and has positive orientation, $(A^{t_i}_{f_i})_{xy}=-t_i$ if $(x,y)$ is in $\text{cut}(f_i)$ and has negative orientation, and $(A^{t_i}_{f_i})_{xy}=0$ if $(x,y) \notin \text{cut}(f_i)$.

\begin{lemma}\label{planar_n_point} For any collection of faces $f_1,\dots,f_n$ of $\mathcal{G}$, there exists a skew-Hermitian matrix $A^{\overline{t}}$ such that the characteristic function of the random vector $(W_{\lambda}(f_1),\ldots,W_{\lambda}(f_n))$ is given by 
 	$$\mathbb{E}_{\mathcal{L}^\lambda}[e^{i\beta(t_1 W_\lambda(f_1)+\dots+t_n W_\lambda(f_n))}]=\mathbb{E}_{\mathcal{L}^\lambda}\left[e^{\left(i\beta\sum\limits_{\gamma\in \mathcal{L}^\lambda}{\int\limits_{\gamma}A^{\overline{t}}}\right)}\right].$$
\end{lemma}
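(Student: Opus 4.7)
The plan is to show that the identity reduces to a deterministic pointwise equality at the level of the integrand, namely
\[
\sum_{\gamma \in \mathcal{L}^\lambda} \int_{\gamma} A^{\bar t} \;=\; t_1 W_\lambda(f_1) + \dots + t_n W_\lambda(f_n),
\]
after which the two characteristic functions are manifestly equal. The natural candidate for $A^{\bar t}$ is exactly the matrix already constructed in the paragraph preceding the lemma, namely $A^{\bar t} = A^{t_1}_{f_1} + \dots + A^{t_n}_{f_n}$, where $A^{t_i}_{f_i}$ assigns the value $+t_i$ (resp.\ $-t_i$) to an oriented edge $(x,y)$ of $\text{cut}(f_i)$ with positive (resp.\ negative) orientation and $0$ to all other edges. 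This matrix is real and skew-symmetric by construction, hence skew-Hermitian.

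The core step is a single-face computation: I want to show that for every oriented loop $\ell = (x_0,x_1,\dots,x_m = x_0)$ in $\mathcal{G}$ and every face $f_i$,
\[
\int_{\ell} A^{t_i}_{f_i} \;=\; \sum_{j=0}^{m-1} \bigl(A^{t_i}_{f_i}\bigr)_{x_j x_{j+1}} \;=\; t_i\, W_\ell(f_i).
\]
By the very definition of $A^{t_i}_{f_i}$, the $j$-th summand is nonzero exactly when the directed edge $(x_j,x_{j+1})$ crosses $\text{cut}(f_i)$, it equals $+t_i$ precisely when that crossing is positive (right-to-left across the associated path $p$), and it equals $-t_i$ precisely when it is negative. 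Summing over $j$ therefore yields $t_i$ times the signed count of crossings of $\text{cut}(f_i)$, which is $t_i W_\ell(f_i)$ by the definition of the winding number.

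Linearity in the one-form (the integral $\int_\ell$ is simply the sum of the corresponding matrix entries along $\ell$) then gives
\[
\int_{\ell} A^{\bar t} \;=\; \sum_{i=1}^{n} \int_{\ell} A^{t_i}_{f_i} \;=\; \sum_{i=1}^{n} t_i\, W_\ell(f_i),
\]
and summing this deterministic identity over all loops $\ell \in \mathcal{L}^\lambda$ (using the definition $W_\lambda(f_i) = \sum_{\ell \in \mathcal{L}^\lambda} W_\ell(f_i)$ in \eqref{def:winding field}) produces the desired pointwise equality of random variables. Multiplying by $i\beta$, exponentiating, and taking expectations concludes the proof.

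There is no real obstacle; the only point requiring a little care is a consistent sign convention so that $(A^{t_i}_{f_i})_{xy}$ produces the correct sign for positive vs.\ negative crossings. The well-definedness of the right-hand side under different choices of cuts, noted as an exercise in the text, guarantees that any consistent choice of cuts for the $f_i$'s yields the same characteristic function — so while the matrix $A^{\bar t}$ itself is not unique, its integral against any loop is.
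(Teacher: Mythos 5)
Your proof is correct and follows exactly the same route as the paper, which simply states that the result follows immediately from the definition of the winding number using the matrix $A^{\bar t}=A^{t_1}_{f_1}+\dots+A^{t_n}_{f_n}$ constructed before the lemma. You have merely spelled out the pointwise identity $\int_\ell A^{\bar t}=\sum_i t_i W_\ell(f_i)$ that the paper leaves implicit.
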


\begin{proof}
Using the matrices $A^{\bar t}$ describe above, the result follows immediately from the definition of winding number. 
\end{proof} 

The quantity in the lemma has several interpretations. Besides being the characteristic function of the random vector $(W_{\lambda}(f_1),\ldots,W_{\lambda}(f_n))$, it can be seen as the $n$-point function of a winding field of the type studied in \cite{camia_lis_brug} (see also \cite{camia2016conformal} for a continuum version). Moreover, by an application of Lemma \ref{partition_function},
\begin{equation*}
\mathbb{E}_{\mathcal{L}^\lambda}[e^{i(t_1 W_\lambda(f_1)+\dots+t_n W_\lambda(f_n))}] = \left(\frac{\det(I-P^{\bar{t}})}{\det(I-P)}\right)^{-\lambda},
\end{equation*}
where
\begin{align}
P_{xy}^{\bar{t}}= \begin{cases}
\frac{e^{i A^{\bar t}_{xy}}}{\kappa_x+d_x} & \text{ if } d(x,y)=1 \\ 0 & \text{ otherwise}
\end{cases}
\end{align}
and $A^{\bar t}$ is one of the matrices described above. A standard calculation using Gaussian integrals shows that
\begin{equation*}
Z^{\bar t}_{GFF} = \Pi_{x \in X} \Big(\frac{2\pi}{\kappa_x+d_x}\Big)^{1/2} \det(I-P^{\bar{t}})^{-1/2},
\end{equation*}
where $Z^{\bar t}_{GFF}$ is the partition function of the Gaussian Free Field (GFF) on $\mathcal{G}$ with Hamiltonian
\begin{equation} \label{Hamiltonian}
H^{\bar t}(\varphi) = -\frac12 \sum_{(x,y) \in E} e^{i A^{\bar t}_{xy}} \varphi_x \varphi_y + \frac12 \sum_{x \in X} (\kappa_x+d_x) \varphi_x^2.
\end{equation}
Hence, $\mathbb{E}_{\mathcal{L}^\lambda}[e^{i(t_1 W_\lambda(f_1)+\dots+t_n W_\lambda(f_n))}]$ can be written as a ratio of partition functions, namely,
\begin{equation*}
\mathbb{E}_{\mathcal{L}^\lambda}[e^{i(t_1 W_\lambda(f_1)+\dots+t_n W_\lambda(f_n))}] = \Big(\frac{Z^{\bar t}_{GFF}}{Z_{GFF}}\Big)^{2\lambda},
\end{equation*}
where $Z_{GFF}$ is the partition function of the `standard' GFF obtained from \eqref{Hamiltonian} by setting $t_1=\ldots=t_n=0$.

\begin{figure}[h!]
	\centering
	\includegraphics[width=.6\linewidth]{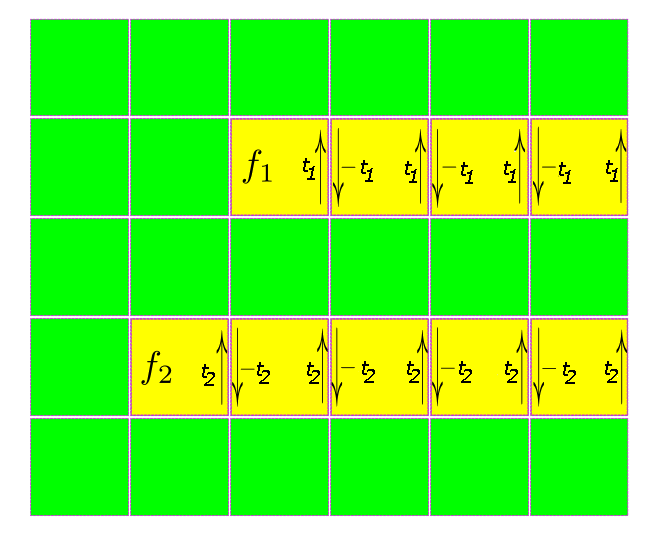}
	\label{Cuts_in_graph}
	\caption{The above figure displays a choice of cuts for faces $f_1$ and $f_2$ in a rectangular grid graph.}
\end{figure}

To state the next theorem we need some additional notation. For any directed edge $e \in \text{cut}(f)$, let $e^-$ and $e^+$ denote the starting and ending vertices of $e$, respectively.

\begin{theorem}\label{Gaussian_winding_field}
Consider a random walk loop soup on a finite graph $\mathcal{G}$ with symmetric transition matrix $P$ and the corresponding winding field $W_{\lambda}$. As $\lambda\rightarrow \infty$, $\frac{1}{\sqrt{\lambda}} W_{\lambda}$ converges to a Gaussian field whose covariance kernel is given by 
\begin{align}
K(f,g)=&\sum\limits_{e \in \text{cut}(f)}P_{e^+e^-}G_{e^+e^-}\mathds{1}_{f=g}\\&+2\sum\limits_{\substack{e_1\in \text{cut}(f)\\ e_2\in \text{cut}(g)}}P_{e_1^+e_1^-}P_{e_2^+e_2^-}\left(G_{e_1^+e_2^-}G_{e_1^-e_2^+}-G_{e_1^+e_2^+}G_{e_1^-e_2^-}\right)\mathds{1}_{f\neq g}.
\end{align}
\end{theorem}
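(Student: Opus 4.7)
The plan is to establish convergence of the characteristic function of $\lambda^{-1/2}(W_\lambda(f_1),\ldots,W_\lambda(f_n))$ at an arbitrary test point $\bar t=(t_1,\ldots,t_n)\in\mathbb{R}^n$; since the dual graph is finite and the limit is Gaussian, this identifies both the limit and its covariance. By Lemma \ref{planar_n_point} applied with $\beta=1/\sqrt{\lambda}$,
\begin{equation*}
\mathbb{E}_{\mathcal{L}^\lambda}\!\Bigl[e^{(i/\sqrt{\lambda})\sum_{k} t_k W_\lambda(f_k)}\Bigr]=\mathbb{E}_{\mathcal{L}^\lambda}\!\Bigl[\exp\!\Bigl(\tfrac{i}{\sqrt{\lambda}}\sum_{\gamma\in\mathcal{L}^\lambda}\int_\gamma A^{\bar t}\Bigr)\Bigr],
\end{equation*}
with $A^{\bar t}=\sum_{k}A^{t_k}_{f_k}$ the skew-symmetric one-form built from the chosen cuts. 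Applying Theorem \ref{clt_one_form} to this one-form with $s=1$ identifies the $\lambda\to\infty$ limit as the Gaussian characteristic function $\exp(-\tfrac12 Q(\bar t))$, where $Q(\bar t)$ is the bilinear trace expression on the right-hand side of Theorem \ref{clt_one_form}. What remains is to expand $Q(\bar t)$ as $\sum_{i,j}t_it_j K(f_i,f_j)$ and read off $K$.

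To carry out the expansion I would write $A^{\bar t}_{xy}=\sum_{k}t_k\sigma_k(x,y)$, with $\sigma_k(x,y)\in\{-1,0,+1\}$ the signed indicator of $\text{cut}(f_k)$. In the first trace the Hadamard square produces the factor $\sigma_i(x,y)\sigma_j(x,y)$, which vanishes for $i\neq j$ once the cuts of distinct faces are chosen to be edge-disjoint; for $i=j$, summing over both orientations of each cut edge and invoking the symmetry of $P$ and $G$ yields exactly the diagonal contribution $\sum_{e\in\text{cut}(f_i)}P_{e^+e^-}G_{e^+e^-}$ appearing in the statement.

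For the second trace I would first apply the identity of Remark \ref{remark} to recast it as
\begin{equation*}
\tfrac{1}{2}\!\sum_{\substack{x_0\sim x_1\\ x_2\sim x_3}}\!P_{x_0x_1}P_{x_2x_3}A^{\bar t}_{x_0x_1}A^{\bar t}_{x_2x_3}\bigl(G_{x_0x_3}G_{x_1x_2}-G_{x_0x_2}G_{x_1x_3}\bigr),
\end{equation*}
the virtue being that the bracketed $G$-combination is skew under $x_0\leftrightarrow x_1$, just as $\sigma_i$ is. Hence the swap $x_0\leftrightarrow x_1$ collapses the $(x_0,x_1)$-summation onto edges $e_1\in\text{cut}(f_i)$ with an overall factor of $2$, and the same reduction with $\sigma_j$ in the $(x_2,x_3)$-slot gives a second factor of $2$. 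Combining these with the $1/2$ from the Remark and the symmetrization factor coming from $\sum_{i,j}t_it_jK(f_i,f_j)$ on the off-diagonal, one assembles the stated $2\sum_{e_1,e_2}P_{e_1^+e_1^-}P_{e_2^+e_2^-}(G_{e_1^+e_2^-}G_{e_1^-e_2^+}-G_{e_1^+e_2^+}G_{e_1^-e_2^-})$ for $f\neq g$.

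The main difficulty is purely bookkeeping: the overall $-1/2$ from the Gaussian exponent, the $1/2$ in Remark \ref{remark}, the two factors of $2$ from the skew symmetries of $\sigma_i$ and $\sigma_j$, and the factor of $2$ arising from the off-diagonal pairs $(i,j)$ and $(j,i)$ in the symmetric sum $\sum_{i,j}t_it_jK(f_i,f_j)$ all need to cancel correctly. One should also observe that the final expression is independent of the particular cuts used to build $A^{\bar t}$; this is automatic from the cut-invariance of each winding number $W_\ell(f_k)$ noted after its definition, which makes the left-hand side of the first displayed equation depend only on the faces $f_1,\ldots,f_n$.
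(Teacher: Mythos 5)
Your overall route is the same as the paper's: combine Lemma \ref{planar_n_point} with Theorem \ref{clt_one_form}, then expand the resulting quadratic form $Q(\bar t)$ and read off $K$ via Remark \ref{remark}, and your off-diagonal bookkeeping (the two factors of $2$ from the skew symmetry in each slot against the $1/2$ of the Remark) does reproduce the stated $f\neq g$ term. You are in fact slightly more careful than the paper in one respect: you make explicit that the cross terms of the first trace vanish only if the cuts of distinct faces are chosen edge-disjoint, whereas the paper asserts $\Tr((P\odot(A^{t_1}\odot A^{t_2}))G)=0$ ``since $P$ is symmetric'', which is not a symmetry statement but a disjointness one. (One small caveat: the displayed statement of Theorem \ref{clt_one_form} has a minus sign between the two traces while its proof, and the use made of it here, has a plus; if you take $Q$ literally from that display your signs will come out wrong.)

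There is, however, a genuine gap in your treatment of the diagonal, and it is the same gap that sits in the paper's own proof. First, the Hadamard-square trace with $i=j$ is $t_i^2\sum_{x,y}P_{xy}\sigma_i(x,y)^2G_{yx}$, and each cut edge contributes through \emph{both} ordered pairs $(e^-,e^+)$ and $(e^+,e^-)$, so this term equals $2t_i^2\sum_{e\in\text{cut}(f_i)}P_{e^+e^-}G_{e^+e^-}$, not the stated quantity without the $2$; your phrase ``summing over both orientations \ldots yields exactly'' the stated sum hides an unexplained factor $1/2$. Second, and more seriously, you never address the $i=j$ contribution of the second trace, and it does not vanish: for $M=P\odot A^{t}_{f}$ skew-symmetric and $G$ symmetric, $\Tr(MGMG)$ is generically nonzero. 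For a single-edge cut $e=(u,v)$ one computes $\Tr(MGMG)=2t^2P_{uv}^2\big(G_{uv}^2-G_{uu}G_{vv}\big)<0$, and a direct check on the $4$-cycle (where $\det(I-P^{s})$ can be diagonalized explicitly) confirms that the variance of $W_\lambda(f)/\sqrt{\lambda}$ is $2P_{uv}G_{uv}+2P_{uv}^2(G_{uv}^2-G_{uu}G_{vv})=\tfrac{2p^4}{1-4p^2}$ rather than $P_{uv}G_{uv}$. So the correct $K(f,f)$ must also carry the ``$e_1,e_2\in\text{cut}(f)$'' part of the second sum; the paper disposes of it by claiming that $\Tr((P\odot A^{t})G(P\odot A^{t})G)$ vanishes for symmetric $P$, which is false, and your proposal disposes of it by silence. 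If you want a clean derivation of the covariance you should either keep this term, or follow the independent route of Remark \ref{alternative-proof} (moments of edge-crossing numbers), being careful there with the extra terms that arise when $e_1=e_2$.
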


\begin{proof}
Combining Lemma \ref{planar_n_point} and Theorem \ref{clt_one_form} shows that the winding field has a Gaussian limit as $\lambda \to \infty$:
\[
\Big\{ \frac{1}{\sqrt{\lambda}}W_\lambda(f) : f \text{ is a face of } \mathcal{G} \Big\} \xRightarrow[weakly]{\lambda\uparrow \infty} \Big\{ W(f) : f \text{ is a face of } \mathcal{G} \Big\}
\]
where $W(\cdot)$ is a Gaussian process on the faces of $G$.

Next, we compute the covariance kernel of the limiting Gaussian process. Choose two faces $f$ and $g$ and let $A^{\overline{t}}=A^{t_1}_{f}+A^{t_2}_{g}$, where $A^{t_1}_{f}$ has nonzero entries only along $\text{cut}(f)$ and $A^{t_2}_{g}$ has nonzero entries along $\text{cut}(g)$, as described above. Using Theorem \ref{clt_one_form} we obtain
\begin{align}
\lim\limits_{\lambda \rightarrow \infty}&\log\mathbb{E}_{\mathcal{L}^\lambda}[e^{i\frac{1}{\sqrt{\lambda}}(t_1 W_\lambda(f_1)+t_2 W_\lambda(f_2))}]\\ =&
-\frac{1}{2} \Big[ \Tr((P\odot (A^{\overline{t}})^{\odot 2})G) +\Tr((P\odot A^{\overline{t}})G(P\odot A^{\overline{t}})G) \Big]\\
=& -\frac{1}{2} \Big[ \Tr((P\odot (A^{t_1}+A^{t_2})^{\odot 2})G) +\Tr((P\odot (A^{t_1}+A^{t_2}))G(P\odot (A^{t_1}+A^{t_2})G) \Big]\\
=& -\frac{1}{2} \Big[ t_1^2 K(f,f) + t_2^2 K(g,g) - 2t_1t_2 K(f,g) \Big].
\end{align}
The variance of $W(f)$ is obtained by setting $t_1=t$ and $t_2=0$. In this case,
\begin{align}
K(f,f)&= \frac{1}{t^2} \Big[ \Tr((P\odot (A^{t})^{\odot 2})G) +\Tr((P\odot A^{t})G(P\odot A^{t})G) \Big] \\
&= \sum\limits_{e\in \text{cut}(f)}P_{e^-e^+}G_{e^+e^-} + \frac{1}{t^2}(\Tr((P\odot A^{t})G(P\odot A^{t})G)).
\end{align}
Since $P$ is assumed to be symmetric, the term $\Tr((P\odot A^{t})G(P\odot A^{t})G)$ vanishes. Therefore,
\[K(f,f)= \sum\limits_{e\in \text{cut}(f)}P_{e^-e^+}G_{e^-e^+}. \]

A similarly calculation, with $A^{\overline{t}}=A^{t_1}_{f}+A^{t_2}_{g}$, where $A^{t_1}_{f}$, gives the covariance:
\begin{eqnarray*}
K(f,g) & = & \frac{-1}{2t_1t_2}\Big[ \Tr((P\odot (A^{t_1}+A^{t_2})^{\odot 2})G) +\Tr(P\odot (A^{t_1}+A^{t_2})G(P\odot (A^{t_1}+A^{t_2})G)) \\
&& \quad - t_1^2 K(f,f) - t_2^2 K(g,g) \Big] \\
& = & \frac{-1}{2t_1t_2}\Big[\Tr((P\odot (A^{t_1}+A^{t_2})^{\odot 2})G) +\Tr(P\odot (A^{t_1}+A^{t_2})G(P\odot (A^{t_1}+A^{t_2})G)) \\
&& \quad - \Tr((P\odot (A^{t_1})^{\odot 2})G) -\Tr((P\odot A^{t_1})G(P\odot A^{t_1})G) \\
&& \quad - \Tr((P\odot (A^{t_2})^{\odot 2})G) -\Tr((P\odot A^{t_2})G(P\odot A^{t_2})G)\Big]\\
& = & \frac{-1}{t_1t_2}\Big[\Tr((P\odot (A^{t_1}\odot A^{t_2}))G)+\Tr((P\odot A^{t_1})G(P\odot A^{t_2})G)\Big].
\end{eqnarray*}
%
%
Since $P$ is symmetric, $\Tr((P\odot (A^{t_1}\odot A^{t_2}))G)=0$ and, using Remark \ref{remark}, we obtain
\[K(f,g)= 2 \sum\limits_{\substack{e_1\in \text{cut}(f)\\ e_2\in \text{cut}(g)}}P_{e_1^+e_1^-}P_{e_2^+e_2^-}\left(G_{e_1^+e_2^-}G_{e_1^-e_2^+}-G_{e_1^+e_2^+}G_{e_1^-e_2^-}\right),\]
which concludes the proof.
\end{proof}

\begin{remark} \label{alternative-proof}
{\rm We provide here an alternative, more direct, proof of Theorem \ref{Gaussian_winding_field}.
For any directed edge $e$, let $e^-$ and $e^+$ to denote the starting and ending vertices respectively. Moreover, let $N_e^+$ be the number of positive crossings of $e$ (i.e., from $e^-$ to $e^+$) by a loop from the loops soup and let $N_e^-$ be the number of negative crossings of $e$ (i.e., from $e^+$ to $e^-$) by a loop from the loops soup. The winding number about a face $f$ can be defined as, 
\[
W_\lambda(f)=\sum\limits_{e \in \text{cut}(f)}(N_e^+-N_e^-).
\]

Therefore the two point function for the winding numbers is given by
\begin{align}
\mathbb{E}_{\mathcal{L}^1}(W_1(f)W_1(g))&=\mathbb{E}_{\mathcal{L}^1}\Big(\sum\limits_{\substack{e_1\in \text{cut}(f)\\ e_2\in \text{cut}(g)}}(N_{e_1}^+-N_{e_1}^-)(N_{e_2}^+-N_{e_2}^-)\Big)\\
&= \mathbb{E}_{\mathcal{L}^1}\Big(\sum\limits_{\substack{e_1\in \text{cut}(f)\\ e_2\in \text{cut}(g)}}(N_{e_1}^+ N_{e_2}^+ + N_{e_1}^- N_{e_2}^- - N_{e_1} N_{e_2}^- - N_{e_1}^- N_{e_2}^+)\Big)\\
&= 2\sum\limits_{\substack{e_1\in \text{cut}(f)\\ e_2\in \text{cut}(g)}}\left(\mathbb{E}_{\mathcal{L}^1}(N_{e_1}^+N_{e_2}^+)-\mathbb{E}_{\mathcal{L}^1}(N_{e_1}^+N_{e_2}^-)\right)
\end{align}

Using this expression and a result from \cite{Loops_stflour} (see \cite[Exercise 10, Chapter 2]{Loops_stflour}, but note that in \cite{Loops_stflour} the Green's function is defined to be $[(I-P)^{-1}]_{x,y}/(\kappa_y+d_y) = P_{yx} G_{xy} = P_{xy} G_{xy}$, when $P$ is symmetric), we obtain
\begin{align}
\mathbb{E}_{\mathcal{L}^1}&(W_1(f)W_1(g))
\\
& = \sum\limits_{\substack{e_1\in \text{cut}(f)\\ e_2\in \text{cut}(g)}}\Big(P_{e_2^+e_2^-}P_{e_1^+e_1^-} G_{e_1^+e_2^-}G_{e_2^+e_1^-}+ P_{e_2^-e_2^+}P_{e_1^-e_1^+} G_{e_1^-e_2^+}G_{e_2^-e_1^+} \\
& \quad \quad \quad - P_{e_2^-e_2^+}P_{e_1^+e_1^-} G_{e_1^+e_2^+}G_{e_2^-e_1^-} - P_{e_2^+e_2^-}P_{e_1^-e_1^+} G_{e_1^-e_2^-}G_{e_2^+e_1^+}\Big)
\\
&=2\sum\limits_{\substack{e_1\in \text{cut}(f)\\ e_2\in \text{cut}(g)}} P_{e_1^+e_1^-}P_{e_2^+e_2^-} \left(G_{e_1^+e_2^-}G_{e_2^+e_1^-}-G_{e_1^+e_2^+}G_{e_2^-e_1^-}\right).
\end{align}

Now take $\lambda=n \in {\mathbb N}$ and note that, for any face $f$, $W_n(f)$ is distributed like $\sum_{i=1}^n W_1^i(f)$, where $\{ W_1^i(f) \}_{i=1,\ldots,n}$ are $n$ i.i.d.\ copies of $W_1(f)$. Therefore, for any collection of faces $f_1,\dots,f_m$, the central limit theorem implies that, as $\lambda=n \to \infty$, the random vector $\frac{1}{\sqrt{n}}\left(W_n(f_1),\dots,W_n(f_m)\right)$ converges to a multivariate Gaussian with covariance kernel given by  the two-point function $\mathbb{E}_{\mathcal{L}^1}(W_1(f)W_1(g))$ calculated above.
}
\end{remark}

\begin{remark} \label{square-lattice}
{\rm
Theorem \ref{Gaussian_winding_field} can be extended to infinite graphs, as we now explain. For concreteness and simplicity, we focus on the square lattice and consider a random walk loop soup with constant killing function: $\kappa_x=\kappa>0$ for all $x \in {\mathbb Z}^2$. Note that in this case the transition matrix $P$ and the Green's function $G$ are symmetric. Moreover, contrary to the case $\kappa=0$, the winding field of the random walk loop soup on ${\mathbb Z}^2$ is well defined when $\kappa>0$.
To see this, note that, since the loop soup is a Poisson process, we can bound the expected number of loops intersecting $(-a,0),(b,0) \in {\mathbb Z}^2$ as follows:
\begin{eqnarray*}
\mathbb{E}_{\mathcal{L}^{\lambda}}\big(\# \text{ loops joining } (-a,0) \text{ and }(b,0) \big) & = & \lambda \mu\big(\gamma: \gamma (-a,0),(b,0) \in \gamma \big) \\
& \leq & \lambda \sum_{m \geq 2(a+b)} \Big( \frac{4}{\kappa+4} \Big)^m \\
& = & \frac{\lambda \kappa}{4} \Big( 1 + \frac{\kappa}{4} \Big)^{-2(a+b)}.
\end{eqnarray*}
Hence, the expected number of loops winding around the origin is bounded above by $\frac{\lambda\kappa}{4} \sum_{a=1}^{\infty}\sum_{b=1}^{\infty} (1+\kappa/4)^{-2(a+b)} < \infty$ for any $\kappa>0$. This means that, with probability one, the number of loops winding around any vertex is finite. Because of this, one can obtain the winding field on ${\mathbb Z}^2$ as the weak limit of winding fields in large finite graphs $\mathcal{G}_n=[-n,n]^2\cap \mathbb{Z}^2$ as $n \to \infty$. It is now clear that one can apply the arguments in Remark \ref{alternative-proof} to the case of the winding field on $\mathbb{Z}^2$.
}
\end{remark}

\section{A Spitzer-type law for windings of the Brownian loop soup}\label{sec-4}
Spitzer showed \cite{spitzer} that the winding of Brownian motion about a given point up to time $t$, when scaled by $1/(2\log t)$, converges in distribution to a Cauchy random variable as $t \rightarrow \infty$. An analogous result for the simple symmetric random walk on ${\mathbb Z}^2$ is contained in Theorem 6 of \cite{budd_winding_2018}. In this section we prove a similar result for the Brownian loop soup in a bounded domain.

Recall that the Brownian loop soup in a planar domain $D \subset {\mathbb C}$ is defined as a Poisson process of loops with intensity measure $\mu^{\text{loop}}$ given by 
\[
\mu^{\text{loop}}(\cdot)=\int\limits_{D}\int\limits_{0}^{\infty}\frac{1}{2\pi t^2}\mu_{\text{BB}}^{z,t}(\cdot)dtdA(z),
\]
where $\mu_{\text{BB}}^{z,t}$ is the Brownian Bridge measure of time length $t$ starting at $z$ and $dA$ is the area measure on the complex plane (see \cite{lawler_werner} for a precise definition). 

For any $z \in D$, we let $W^\delta_\lambda(z)$ denote the sum of the winding numbers about $z$ of all Brownian loops contained in $D$ with diameter at least $\delta$ for some $\delta>0$.
\begin{theorem}
Consider a bounded domain $D \subset {\mathbb C}$. For any $z \in D$, as $\delta \rightarrow 0$, $\frac{W^\delta_\lambda(z)}{\log\delta}$ converges weakly to a Cauchy random variable with location parameter $0$ and scale parameter $\frac{\lambda}{2\pi}$.
\end{theorem}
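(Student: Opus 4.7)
The plan is to apply L\'evy's continuity theorem after using Campbell's formula for the Poisson loop soup to convert the statement into an asymptotic analysis of the Brownian loop measure of loops of prescribed winding number. Concretely, writing $W^\delta_\lambda(z)=\sum_{\gamma\in\mathcal{L}^\lambda,\ \text{diam}(\gamma)\ge\delta} W_\gamma(z)$ with $W_\gamma(z)$ the winding number of $\gamma$ about $z$, Campbell's formula yields
\begin{equation*}
\log \mathbb{E}\!\left[e^{i s W^\delta_\lambda(z)/\log\delta}\right] \;=\; \lambda \sum_{n\neq 0}\bigl(e^{-isn/|\log\delta|}-1\bigr)\, \nu_n^\delta,
\end{equation*}
where $\nu_n^\delta := \mu^{\text{loop}}\bigl(\{\gamma\subset D : W_\gamma(z)=n,\ \text{diam}(\gamma)\ge \delta\}\bigr)$. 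Since the characteristic function of a Cauchy with location $0$ and scale $\lambda/(2\pi)$ is $e^{-\lambda|s|/(2\pi)}$, the task is to show that the right-hand side converges to $-\lambda|s|/(2\pi)$ as $\delta\to 0$.

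The heart of the argument is the asymptotic
\begin{equation*}
\nu_n^\delta \;=\; \frac{|\log\delta|}{2\pi^2 n^2} + O\!\left(\frac{1}{n^2}\right),\qquad \delta\to 0,
\end{equation*}
with the $O(1/n^2)$ uniform in $n$ and $\delta$. The logarithmic form follows at once from the scale invariance of the Brownian loop measure on $\mathbb{C}$: under $\gamma\mapsto c\gamma$ the measure is preserved, winding is invariant, and diameter scales, so $\mu^{\text{loop}}_\mathbb{C}\bigl(\{W_\gamma(0)=n,\ \text{diam}\in[r,R]\}\bigr) = a_n\log(R/r)$ for a universal constant $a_n$. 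Loops of diameter at most $\text{dist}(z,\partial D)/2$ lie in $D$ regardless of their location, so the divergent part of $\nu_n^\delta$ matches the full-plane expression; the remaining contribution from loops of larger diameter is bounded using the fact that for large $|n|$ the winding distribution of the Brownian bridge inherits the Spitzer Cauchy tail $\sim 1/n^2$, which integrates against the loop time intensity $dt/(2\pi t^2)$ to give the uniform $O(1/n^2)$ bound. To identify the coefficient $a_n=1/(2\pi^2 n^2)$ I would decompose the loop measure as $\int_0^\infty\!\int_\mathbb{C} (2\pi t^2)^{-1} \mu^{w,t}_{\text{BB}}(\cdot)\, dA(w)\,dt$, apply Yor's explicit formula for the winding number of the Brownian bridge from $w$ to $w$ of length $t$ about the origin, and perform the $(w,t)$-integral after the scaling substitution $u=|w|^2/t$.

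With the asymptotic in hand, the classical Fourier identity $\sum_{m\ge 1} m^{-2}\cos(mx)=\pi^2/6-\pi x/2+x^2/4$ on $[0,2\pi]$ gives
\begin{equation*}
\sum_{n\neq 0}\frac{e^{-isn/L}-1}{n^2} \;=\; -\frac{\pi|s|}{L}+O(L^{-2}),\qquad L:=|\log\delta|,
\end{equation*}
so multiplying by $\lambda L/(2\pi^2)$ produces the desired limit $-\lambda|s|/(2\pi)$; the uniform $O(1/n^2)$ error on $\nu_n^\delta$ combined with the bound $|e^{-isn/L}-1|\le \min(2,|sn|/L)$ makes the remainder vanish by a split of the $n$-sum at $|n|\sim L/|s|$. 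The principal obstacle is pinning down $a_n=1/(2\pi^2 n^2)$: the logarithmic form in $\delta$ is essentially free from scaling, but the precise coefficient requires either Yor's explicit winding density for the Brownian bridge combined with the scaling integral above, or an equivalent calculation on the universal cover of $\mathbb{C}\setminus\{z\}$, where loops of winding $n$ correspond to Brownian bridges of imaginary displacement $2\pi n$ and the integrand reduces to a one-parameter Gaussian.
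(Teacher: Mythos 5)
Your route is sound but genuinely different from the paper's. The paper simply splits $W^\delta_\lambda(z)=W^{\delta,d_z}_\lambda(z)+W^{d_z}_\lambda(z)$ into independent pieces, invokes Lemma 3.2 of \cite{camia2016conformal} for the closed-form characteristic function $\mathbb{E}\big(e^{i\beta W^{\delta,d_z}_\lambda(z)}\big)=(d_z/\delta)^{-\lambda\beta(2\pi-\beta)/4\pi^2}$ (with $\beta$ reduced mod $2\pi$), sets $\beta=s/\log\delta$, and reads off the Cauchy limit in a few lines; the large-loop part contributes trivially because its law does not depend on $\delta$. You instead work on the Fourier-dual side, via Campbell's formula and the per-winding-number loop masses $\nu_n^\delta$. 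The two are exactly equivalent: expanding the cited formula in a Fourier series in $\beta$, using the same identity $\sum_{m\ge1}m^{-2}\cos(mx)=\pi^2/6-\pi x/2+x^2/4$ that you quote, gives $\mu^{\text{loop}}\big(W=n,\ \mathrm{diam}\in[\delta,d_z]\big)=\tfrac{1}{2\pi^2n^2}\log(d_z/\delta)$, so your $a_n=1/(2\pi^2 n^2)$ is correct and is precisely the content of the lemma the paper cites. The ``principal obstacle'' you identify --- pinning down $a_n$ --- is therefore the one step the paper outsources to \cite{camia2016conformal}; you could either cite that lemma and recover $a_n$ by Fourier inversion (turning your argument into a slightly longer but instructive rederivation), or carry out the Yor-formula computation you sketch, which is more work but self-contained. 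Your scale-invariance argument for the $\log(R/r)$ form, the uniform $O(1/n^2)$ control of the loops of diameter above a fixed threshold (finite total mass plus the Cauchy tail of the bridge winding), and the final splitting of the $n$-sum at $|n|\sim L/|s|$ are all fine. One small imprecision to fix: a loop of diameter less than $d_z$ does not lie in $D$ ``regardless of its location''; what you need (and what is true) is that a loop with nonzero winding about $z$ contains $z$ in its convex hull, hence lies in the ball of radius $\mathrm{diam}$ about $z$, which is contained in $D$ once the diameter is below $d_z$ --- this is also the reason only such loops contribute to $\nu_n^\delta$ for $n\neq0$, so the divergent part of $\nu_n^\delta$ does match the full-plane computation as you claim.
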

\begin{proof}
Let $d_z$ denote the distance between $z$ and the boundary of $D$ and, for $\delta<d_z$, let $W^{\delta,d_z}_\lambda(z)$ denote the sum of the winding numbers about $z$ of all Brownian loops with diameter between $\delta$ and $d_z$. Note that, because of the Poissonian nature of the Brownian loop soup, the random variables $W^{\delta,d_z}_\lambda(z)$ and $W^{d_z}_\lambda(z)$ are independent.

The key ingredient in the proof is Lemma 3.2 of \cite{camia2016conformal}, which states, in our notation, that
\[
\mathbb{E}\big(e^{i \beta W^{\delta,d_z}_\lambda(z)}\big) = \Big( \frac{d_z}{\delta} \Big)^{-\lambda\frac{\beta(2\pi-\beta)}{4\pi^2}}
= d_z^{-\lambda\frac{\beta(2\pi-\beta)}{4\pi^2}} e^{\lambda\frac{\beta(2\pi-\beta)}{4\pi^2}\log\delta}
\]
when $\beta \in [0,2\pi)$, and that the same expression holds with $\beta$ replaced by $(\beta \mod 2\pi)$ when $\beta \notin [0,2\pi)$.
With this result, choosing $\beta=s/\log\delta$, the limit as $\delta \to 0$ of the characteristic function of $\frac{W^{\delta}_\lambda(z)}{\log\delta}$ can be computed as follows:
\[
\lim_{\delta \to 0} \mathbb{E}\big(e^{i \frac{s}{\log\delta} W^\delta_\lambda(z)}\big) = \lim_{\delta \to 0} \mathbb{E}\big(e^{i \frac{s}{\log\delta} W^{\delta,d_z}_\lambda(z)}\big) \mathbb{E}\big(e^{i \frac{s}{\log\delta} W^{d_z}_\lambda(z)}\big) = e^{-\frac{\lambda}{2\pi}|s|},
\]
where the right hand side is the characteristic function of a Cauchy random variable with location parameter $0$ and scale parameter $\frac{\lambda}{2\pi}$.
\end{proof}

\section{Holonomies of loop ensembles}\label{loop_holonomies}
Theorem \ref{clt_one_form} can be generalized to loop holonomies. Assume that the transition matrix $P$ introduced at the beginning of Section \ref{CLT-windings} is symmetric and hence the Green's function is also symmetric. We consider a connection on the graph $\mathcal{G}$, given by assigning to each oriented edge $(x,y)$ a $d\times d$ unitary matrix $\bf {U}_{xy}$ of the form $U_{xy}=e^{iA_{xy}}$ for some Hermitian matrix $A_{xy}$. For any closed loop $\gamma=\{x_0,x_1,\dots,x_n,x_0\}$, we denote $$\prod\limits_{\gamma}{\bf U=U_{x_0x_1}U_{x_1x_2}\dots U_{x_nx_0}}.$$
We also write $\Tr_\gamma\left[{\bf U}\right]$ for $\Tr[{\bf U_{x_0x_1}}{\bf U_{x_1x_2}}\dots {\bf U_{x_{n}x_0}}]$, which is well defined as the expression inside $\Tr[\cdot]$ is shift invariant. We will re-do the computations leading to Theorem \ref{clt_one_form}, in this case by invoking block matrices. Note that since ${\bf U}_{xy} = {\bf U}_{yx}^{-1}$, we assume ${\bf A}_{xy}=-{\bf A}_{yx}$.  Denote the corresponding block matrix whose blocks are ${\bf A_{xy}}$ with ${\bf A}$. Similarly denote $\Tr_\gamma\left[e^{i\beta\bf{A}}\right]:=\Tr\left[e^{i\beta {\bf A_{x_0x_1}}}\dots e^{i\beta {\bf A_{x_nx_0}}}\right]$. We denote the tensor product between two matrices $A$ and $B$ to be $A \otimes B$ and the Hadamard product to be $A \odot B$.

In this context, the quantity
$\exp{\left(i\sum\limits_{\gamma\in \mathcal{L}^\lambda}\frac{1}{\sqrt{\lambda}}\int\limits_{\gamma}A\right)} = \prod_{\gamma\in \mathcal{L}^\lambda} e^{\frac{i}{\sqrt{\lambda}}\int\limits_{\gamma}A}$
that appears in Theorem \ref{clt_one_form} will be replaced by $\prod\limits_{\gamma \in \mathcal{L}^\lambda}\frac{1}{d}\Tr_\gamma\left(e^{i\frac{1}{\sqrt{\lambda}}{\bf A}}\right)$. The expectation of this quantity cannot be interpreted as a characteristic function, but other interpretations such as those discusses after Lemma \ref{partition_function} and Lemma \ref{planar_n_point} are still available.

The first step towards the main result of this section is the following lemma.

\begin{lemma}\label{partition-function-non-commutative}
With the above notation we have
$$\mathbb{E}_{\mathcal{L}^\lambda}\left[\prod\limits_{\gamma \in \mathcal{L}^\lambda}\frac{1}{d}\Tr_\gamma\left(e^{i\beta\bf{A}}\right)\right]=\left(\frac{\det(I_{nd}-(P\otimes J_d )\odot \bf{U}_\beta)}{\det(I_{nd}-P\otimes I_d)}\right)^{-\lambda},$$ 
where $(P\otimes  J_d) \odot {\bf U}$ and $P \otimes I_d$ are block matrices whose blocks are $P_{ij}{\bf U_{ij}}$ and $P_{ij}I_d$ respectively, $J_d$ is $d \times d$ matrix whose entries are all $1$ and for any $k$, $I_k$ is the $k \times k$ identity matrix.
\end{lemma}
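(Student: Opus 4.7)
The plan is to mirror the proof of Lemma \ref{partition_function} in the block-matrix setting. The crucial identification is that the $nd \times nd$ block matrix $M := (P \otimes J_d) \odot {\bf U}_\beta$ has $(x,y)$-block equal to $P_{xy}\, U^\beta_{xy}$, with $U^\beta_{xy} = e^{i\beta {\bf A}_{xy}}$, so that $M$ plays the same role in the noncommutative case that the scalar matrix $P^\beta$ played in the commutative one.

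First I invoke Campbell's formula for the Poisson loop soup $\mathcal{L}^\lambda$, applied to the multiplicative functional $f(\gamma) := \frac{1}{d} \Tr_\gamma(e^{i\beta {\bf A}})$:
\[
\mathbb{E}_{\mathcal{L}^\lambda}\Bigl[\prod_{\gamma \in \mathcal{L}^\lambda} \tfrac{1}{d}\Tr_\gamma(e^{i\beta {\bf A}})\Bigr] = \exp\Bigl(\lambda \int \bigl[\tfrac{1}{d}\Tr_\gamma(e^{i\beta {\bf A}}) - 1\bigr]\, d\mu(\gamma)\Bigr).
\]
This reduces the proof to evaluating the integral inside the exponential.

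Next I convert that integral into a trace series involving $M$. Passing from unrooted loops to rooted ones (using the shift-invariance of $\Tr_\gamma$) and expanding the matrix trace into a sum over internal indices $(i_0,\ldots,i_{k-1})\in\{1,\ldots,d\}^k$ turns each contribution to $\int \Tr_\gamma(e^{i\beta {\bf A}})\, d\mu$ into a product of entries of $M$ along a closed path in the extended vertex set $X \times \{1,\ldots,d\}$. Summing over all such closed paths of length $k$ reproduces $\Tr(M^k)$ as a trace on $\mathbb{C}^{nd}$, so
\[
\int \Tr_\gamma(e^{i\beta {\bf A}})\, d\mu(\gamma) = \sum_{k \geq 2} \frac{1}{k}\Tr(M^k) = -\log\det(I_{nd} - M).
\]
Convergence of the series follows from the unitarity of each $U^\beta_{xy}$ together with the hypothesis $\kappa \not\equiv 0$, which forces the spectral radius of $P$, and hence of $M$, to be strictly less than $1$. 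The analogous computation with every $U^\beta_{xy}$ replaced by $I_d$ yields the companion identity $\int d\mu(\gamma) = -\log\det(I_n - P) = -\frac{1}{d}\log\det(I_{nd} - P \otimes I_d)$, which interchanges the scalar and block normalisations.

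Finally, substituting these two determinantal identities into Campbell's formula and collecting logarithms produces the claimed ratio of block determinants. The main technical obstacle is the bookkeeping in the second step: one must carefully align the trace of the noncommutative product of $d \times d$ matrices appearing in $\Tr_\gamma$ with the trace on $\mathbb{C}^{nd}$ of powers of $M$. The Hadamard construction $M = (P \otimes J_d) \odot {\bf U}_\beta$ does precisely this, by distributing the scalar $P_{xy}$ uniformly across every entry of the $d \times d$ block $U^\beta_{xy}$; once this identification is in place, the rest of the proof proceeds formally as in the scalar argument of Lemma \ref{partition_function}.
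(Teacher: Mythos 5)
Your proposal follows the same route as the paper's own proof: Campbell's formula for the Poisson loop soup, identification of the loop integral with the trace series $\sum_k\frac1k\Tr(M^k)$ for the block matrix $M=(P\otimes J_d)\odot{\bf U}_\beta$ acting on ${\mathbb C}^{nd}$, and the $\Tr\log=\log\det$ identity. Your handling of the key combinatorial step (matching $\Tr_\gamma$ of the noncommutative holonomy product with the trace on ${\mathbb C}^{nd}$ of $M^k$) is in fact more explicit than the paper's, which simply asserts the corresponding equality.

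There is, however, one point where your argument does not close as written. Your two intermediate identities are
\[
\int \Tr_\gamma\bigl(e^{i\beta{\bf A}}\bigr)\,d\mu(\gamma)=-\log\det(I_{nd}-M),
\qquad
\int d\mu(\gamma)=-\tfrac{1}{d}\log\det(I_{nd}-P\otimes I_d),
\]
so Campbell's formula applied to $f(\gamma)=\tfrac1d\Tr_\gamma\bigl(e^{i\beta{\bf A}}\bigr)$ yields
\[
\exp\Bigl(\lambda\int(f-1)\,d\mu\Bigr)
=\Bigl(\tfrac{\det(I_{nd}-M)}{\det(I_{nd}-P\otimes I_d)}\Bigr)^{-\lambda/d},
\]
that is, the determinant ratio raised to the power $-\lambda/d$ rather than $-\lambda$. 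Your closing sentence (``collecting logarithms produces the claimed ratio'') silently discards this factor of $1/d$. To be fair, the paper's own computation elides the same normalisation (its first displayed line omits the $\tfrac1d$ on both trace series), and the two exponents coincide when $d=1$; but for $d>1$ your stated identities and your stated conclusion are mutually inconsistent. You should either carry the exponent $-\lambda/d$ through to the final formula or justify explicitly where the factor $1/d$ goes. A minor additional remark: the claim that the spectral radius of $M$ is at most that of $P$ is correct but deserves a one-line justification, e.g.\ via the blockwise bound $\|(M^k)_{xy}\|\leq (P^k)_{xy}$ coming from unitarity of the holonomy products.
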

\begin{proof}	
The statement follows from a computation similar to that in the proof of Lemma \ref{partition_function}, namely 
\begin{align*}
\mathbb{E}_{\mathcal{L}^\lambda}\left[\prod\limits_{\gamma \in \mathcal{L}^\lambda}\frac{1}{d}\Tr_\gamma\left(e^{i\beta\bf{A}}\right)\right]
&= \frac{\exp(-\lambda\sum\limits_{k=1}^{\infty}\frac{1}{k}\Tr((P\otimes  J_d) \odot {\bf U}_\beta)^k)}{\exp(-\lambda\sum\limits_{k=1}^{\infty}\frac{1}{k}\Tr(P\otimes I_d)^k)}\\ 
&= \frac{e^{\lambda\Tr\log(I-(P\otimes  J_d) \odot {\bf U}_\beta)}}{e^{\lambda\Tr\log(I-P\otimes I_d)}}\\
&= \frac{(\det(I-(P\otimes  J_d) \odot{\bf U}_\beta))^{-\lambda}}{(\det(I- P\otimes I_d))^{-\lambda}}.
\end{align*}
For readers interested in more details, we note that a similar computation can be found in \cite[Proposition 23]{Loops_stflour}.
\end{proof}

We are now ready to state and prove the main result of this section.
\begin{theorem}\label{clt_representation}
With the notation above we have
\begin{align}
&	\lim\limits_{\lambda \rightarrow \infty} \mathbb{E}_{\mathcal{L}^\lambda}\left[\prod\limits_{\gamma \in \mathcal{L}^\lambda}\frac{1}{d}\Tr_\gamma\left(e^{i\frac{1}{\sqrt{\lambda}}{\bf A}}\right)\right]=\\&\exp{\Big(
	-\frac{1}{2}\Big[\sum\limits_{x \thicksim y}G_{xy}P_{xy}\Tr({\bf A}_{xy}^2)+ \sum\limits_{\substack{ x_0\thicksim x_1\\ x_2\thicksim x_3}}P_{x_0x_1}P_{x_2x_3}\Tr[{\bf A}_{x_0x_1}{\bf A}_{x_2x_3}] (G_{x_0x_3}G_{x_1x_2}-G_{x_0x_2}G_{x_1x_3})\Big]\Big)}.
\end{align}
\end{theorem}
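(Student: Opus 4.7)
The approach is to follow the proof of Theorem \ref{clt_one_form}, carrying everything through on the block-matrix level. Starting from Lemma \ref{partition-function-non-commutative}, I would introduce $\tilde P := P\otimes I_d$, $\tilde G := (I_{nd}-\tilde P)^{-1} = G\otimes I_d$, $\tilde P_\beta := (P\otimes J_d)\odot\mathbf{U}_\beta$, and $\tilde E_\beta := \tilde P - \tilde P_\beta$, whose $(x,y)$-block equals $P_{xy}(I_d-e^{i\beta\mathbf{A}_{xy}})$ when $x\sim y$ and vanishes otherwise. Exactly as in the scalar case, the ratio in Lemma \ref{partition-function-non-commutative} then reduces to $\det(I+\tilde G\tilde E_\beta)^{-\lambda}$.

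The next step is to expand $\log\det(I+\tilde G\tilde E_\beta) = \Tr(\tilde G\tilde E_\beta) - \tfrac{1}{2}\Tr((\tilde G\tilde E_\beta)^2) + O(\beta^3\|\mathbf{A}\|^3)$ and plug in $e^{i\beta\mathbf{A}_{xy}} = I_d + i\beta\mathbf{A}_{xy} - \tfrac{\beta^2}{2}\mathbf{A}_{xy}^2 + O(\beta^3)$. Because each block of $\tilde G$ is a scalar multiple of $I_d$, the scalar factors $G_{xy}$ can be pulled out of the traces. The first trace becomes
\begin{align*}
\Tr(\tilde G\tilde E_\beta) = \sum_{x\sim y}G_{xy}P_{yx}\bigl(-i\beta\Tr(\mathbf{A}_{yx}) + \tfrac{\beta^2}{2}\Tr(\mathbf{A}_{yx}^2)\bigr) + O(\beta^3);
\end{align*}
the linear-in-$\beta$ contribution vanishes under the $x\leftrightarrow y$ swap, using $\mathbf{A}_{yx} = -\mathbf{A}_{xy}$ together with symmetry of $P$ and $G$, leaving $\tfrac{\beta^2}{2}\sum_{x\sim y}G_{xy}P_{xy}\Tr(\mathbf{A}_{xy}^2)$. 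The quadratic trace, obtained by expanding each of the four $\tilde E_\beta$ factors to linear order, yields
\begin{align*}
\Tr\bigl((\tilde G\tilde E_\beta)^2\bigr) = -\beta^2\sum_{\substack{x_0\sim x_1\\ x_2\sim x_3}}P_{x_0x_1}P_{x_2x_3}G_{x_1x_2}G_{x_3x_0}\Tr[\mathbf{A}_{x_0x_1}\mathbf{A}_{x_2x_3}] + O(\beta^3).
\end{align*}

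Setting $\beta = 1/\sqrt\lambda$, the $-\lambda$ prefactor cancels the $\beta^2 = 1/\lambda$, the error is $O(\lambda^{-1/2})$, and in the limit one obtains $-\tfrac{1}{2}$ times the sum of the two displayed expressions. The main remaining step, and the only delicate one, is to rewrite the $(x_0,x_1,x_2,x_3)$-sum as the antisymmetric Green's-function combination $G_{x_0x_3}G_{x_1x_2}-G_{x_0x_2}G_{x_1x_3}$ appearing in the statement. This is the non-commutative analog of Remark \ref{remark}; since $\mathbf{A}_{x_0x_1}$ and $\mathbf{A}_{x_2x_3}$ need not commute, the scalar identity does not apply directly. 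Instead, one notes that by cyclicity of the trace and skew-Hermiticity $\mathbf{A}_{x_3x_2} = -\mathbf{A}_{x_2x_3}$ one has $\Tr[\mathbf{A}_{x_0x_1}\mathbf{A}_{x_3x_2}] = -\Tr[\mathbf{A}_{x_0x_1}\mathbf{A}_{x_2x_3}]$, and then symmetrizes the sum under $x_2\leftrightarrow x_3$, using $P_{x_2x_3}=P_{x_3x_2}$ and $G_{x_iy}=G_{yx_i}$, to produce the required antisymmetric combination, which completes the proof.
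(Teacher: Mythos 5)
Your proposal is correct and follows essentially the same route as the paper's proof: reduction to $\det\big(I+(G\otimes I_d)\mathbf{E}^{\beta\mathbf{A}}\big)^{-\lambda}$, blockwise expansion of the linear and quadratic traces using that the blocks of $G\otimes I_d$ are scalar multiples of $I_d$, and the $x_2\leftrightarrow x_3$ symmetrization (via $\mathbf{A}_{x_3x_2}=-\mathbf{A}_{x_2x_3}$ and the symmetry of $P$ and $G$) to produce the antisymmetric Green's-function combination. One remark: that symmetrization yields the antisymmetric combination with an extra factor $\tfrac12$, so your displayed formulas, tracked literally, give the second term with coefficient $-\tfrac14$ rather than the stated $-\tfrac12$; the paper's own proof makes the same silent jump between its intermediate computation of $\Tr(\mathbf{E}(G\otimes I_d)\mathbf{E}(G\otimes I_d))$ and its final display, so this is a shared constant-tracking issue rather than a gap specific to your argument.
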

\begin{proof}
We follow the computation in the proof of Theorem \ref{clt_one_form}. From Lemma \ref{partition-function-non-commutative}, defining ${\bf E}^{\beta{\bf A}} = P \otimes { I}_d - (P \otimes  J_d )\odot {\bf U}_\beta$, we have
\begin{align*}
	 \mathbb{E}_{\mathcal{L}^\lambda}\left[\prod\limits_{\gamma \in \mathcal{L}^\lambda}\frac{1}{d}\Tr_\gamma\left(e^{i\beta{\bf A}}\right)\right] &= \left(\frac{\det(I_{nd}-(P\otimes  J_d) \odot \bf{U}_\beta)}{\det(I_{nd}-P\otimes I_d)}\right)^{-\lambda}\\
	&= (\det(I_{nd}+(I_{nd}-P\otimes{ I}_d)^{-1}{\bf E}^{\beta A }))^{-\lambda}.
\end{align*}
Therefore,
\begin{align}\label{det_ratio_tensor}
&\lim\limits_{\lambda \rightarrow \infty}\left(	\frac{\det(I_{nd}-(P\otimes  J_d) \odot {\bf U}_\frac{1}{\sqrt{\lambda}})}{\det(I_{nd}-P\otimes I_d)}\right)^\lambda \\ &= \lim\limits_{\lambda \rightarrow \infty} \bigg(1 + \Tr((I_{nd}-P\otimes I_d)^{-1}{\bf E}^{\frac{1}{\sqrt{\lambda}}  {\bf A}})\\& \hspace{2 cm} +\Tr((I_{nd}-P\otimes I_d)^{-1}{\bf E}^{\frac{1}{\sqrt{\lambda}}  {\bf A}}\wedge(I_{nd}-P\otimes I_d)^{-1}{\bf E}^{\frac{1}{\sqrt{\lambda}}  {\bf A}}) +O\bigg(\frac{1}{\lambda^\frac{3}{2}}\| {\bf A}\|^3\bigg)\bigg)^\lambda\\
&= \lim\limits_{\lambda \rightarrow \infty} \exp{\Big[ \lambda \Big( \Tr\big((I_{nd}-P\otimes I_d)^{-1}{\bf E}^{\frac{1}{\sqrt{\lambda}} {\bf A}}\big) + \Tr\big((I_{nd}-P\otimes I_d)^{-1}{\bf E}^{\frac{1}{\sqrt{\lambda}}  {\bf A}}\wedge(I_{nd}-P\otimes I_d)^{-1}{\bf E}^{\frac{1}{\sqrt{\lambda}} {\bf A}}\big) \Big) \Big]}.
\end{align}

Note that $(I_{nd}-P\otimes I_d)^{-1}= ((I_n-P)\otimes I_d)^{-1}=G\otimes I_d$. Moreover, expanding the traces of block matrices in terms of traces of blocks, we have
\begin{align}
\Tr((G\otimes I_d){\bf E}^{\beta {\bf A}}) &= \sum\limits_{x \thicksim y}G_{xy}P_{yx}\Tr(I_d-e^{i\beta{\bf A}_{yx}})\\
& = -\frac{\beta^2}{2}\sum\limits_{x \thicksim y}G_{xy}P_{xy}\Tr({\bf A}_{xy}^2) +O(\beta^3\|{\bf A}\|^3_\infty). \label{one_point_non-commutative}
\end{align}
Similarly,

\begin{align}
\Tr({\bf E}^{\beta  {\bf A}}(G\otimes I_d){\bf E}^{\beta  {\bf A}}(G\otimes I_d)) ={}& \sum\limits_{\substack{ x_0\thicksim x_1\\ x_2\thicksim x_3}}\Tr(E^{\beta {\bf A}_{x_0,x_1}}G_{x_1x_2}E^{\beta {\bf A}_{x_2,x_3}}G_{x_3x_0}),\\
=& \sum\limits_{\substack{ x_0\thicksim x_1\\ x_2\thicksim x_3}}\Tr[E^{\beta {\bf A}_{x_0x_1}}G_{x_1x_2}E^{\beta {\bf A}_{x_2x_3}}G_{x_3x_0}]\\
=& \sum\limits_{\substack{ x_0\thicksim x_1\\x_2\thicksim x_3}}\Tr[(I_d-e^{i\beta{\bf A}_{x_0x_1}})(I_d-e^{i\beta{\bf A}_{x_2x_3}})P_{x_0x_1}P_{x_2x_3}G_{x_0x_3}G_{x_1x_2}]\\
=& \sum\limits_{\substack{ x_0\thicksim x_1\\ x_2\thicksim x_3}}P_{x_0x_1}P_{x_2x_3}\Tr[-\beta^2 {\bf A}_{x_0x_1}{\bf A}_{x_2x_3}G_{x_0x_3}G_{x_2x_1}]
 +O(\beta^3\|{\bf A}\|^3_\infty)\\
=& -\frac{\beta^2}{2} \sum\limits_{\substack{ x_0\thicksim x_1\\ x_2\thicksim x_3}}P_{x_0x_1}P_{x_2x_3}\Tr[{\bf A}_{x_0x_1}{\bf A}_{x_2x_3}] (G_{x_0x_3}G_{x_1x_2}-G_{x_0x_2}G_{x_1x_3}) \\&+O(\beta^3\|{\bf A}\|^3_\infty).
\end{align}

Invoking the identity $\Tr\big(M\wedge M)= \frac{1}{2}(\Tr(M)^2-\Tr(M^2)\big)$ and the computation above, we have that
\begin{align}
\lim\limits_{\lambda \rightarrow \infty}&\log \mathbb{E}_{\mathcal{L}^\lambda}\left[\prod\limits_{\gamma \in \mathcal{L}^\lambda}\frac{1}{d}\Tr_\gamma\left(e^{i\frac{1}{\sqrt{\lambda}}{\bf A}}\right)\right]\\ =&
-\frac{1}{2}\sum\limits_{x \thicksim y}G_{xy}P_{xy}\Tr({\bf A}_{xy}^2)\\& -\frac{1}{2} \sum\limits_{\substack{ x_0\thicksim x_1\\ x_2\thicksim x_3}}P_{x_0x_1}P_{x_2x_3}\Tr[{\bf A}_{x_0x_1}{\bf A}_{x_2x_3}] (G_{x_0x_3}G_{x_1x_2}-G_{x_0x_2}G_{x_1x_3}),
\end{align}
which concludes the proof.
\end{proof}

\bigskip

\bigbreak\noindent\textbf{Acknowledgments.}
FC thanks David Brydges for an enlightening discussion during the workshop ``Random Structures in High Dimensions'' held in June-July 2016 at the Casa Matem\'atica Oaxaca (CMO) in Oaxaca, Mexico.

\bibliographystyle{alpha}
\bibliography{loop_soups}
\end{document}